\newtheorem{theorem}{Theorem}
\newtheorem{proposition}[theorem]{Proposition}
\newtheorem{lemma}[theorem]{Lemma}
\newtheorem{corollary}[theorem]{Corollary}
\tikzstyle{white node}=[fill=white, draw=black, shape=circle, inner sep=0pt, minimum size=0.1cm]
\tikzstyle{black node}=[fill=black, draw=black, shape=circle, fill=black, draw=black, inner sep=0pt, minimum size=0.1cm]
\tikzstyle{none}=[inner sep=0pt]
\tikzstyle{grey dashed edge}=[-, dash pattern=on 2mm off 2mm, thick, dashed, draw={rgb,255: red,191; green,191; blue,191}]
\tikzstyle{black edge}=[-, fill=none, thick]
\begin{document}
\onehalfspace
\title{Majority additive coloring and the maximum degree}
\author{Christoph Brause$^1$ \and Dieter Rautenbach$^2$ \and Laurin Schwartze$^2$}
\date{}
\maketitle
\vspace{-1cm}
\begin{center}
$^1$ TU Bergakademie Freiberg, 09596 Freiberg, Germany\\
\texttt{brause@math.tu-freiberg.de}\\[3mm]
$^2$ Ulm University, Ulm, Germany\\
\texttt{$\{$dieter.rautenbach,laurin.schwartze$\}$@uni-ulm.de}
\end{center}

\begin{abstract}
Kamyczura introduced the notion of a majority additive $k$-coloring of a graph $G$
as a function $c: V(G) \to \{1,2,\ldots,k\}$ such that
$$\left|\left\{u \in N_G(v):\sum_{w \in N_G(u)} c(w) = s \right\}\right|\leq \max\left\{1,\frac{d_G(v)}{2}\right\}$$
for every vertex $v$ of $G$ and every positive integer $s$. 
We show that every graph $G$ of maximum degree $\Delta$ admitting a majority additive coloring 
has a majority additive $\mathcal{O}\left(\Delta^2\right)$-coloring.
Under additional restrictions 
we improve this to sublinear in $\Delta$. 
We show that determining whether a majority additive $k$-coloring exists for a given graph 
is \textbf{NP}-complete for all $k\geq 2$.\\[3mm]
{\bf Keywords}: Majority additive coloring; majority coloring; additive coloring; lucky labeling; unfriendly partition
\end{abstract}

\section{Introduction}\label{section1}
We use standard graph theoretic notation. 
For a graph $G$, denote by $n(G)$, $\Delta(G)$, and $\delta(G)$ 
the order, the maximum degree, and the minimum degree of $G$, respectively. 
We write $[k]$ for $\{1,2,\ldots,k\}$ and $[k]_0$ for $[k] \cup \{0\}$, respectively.

Kamyczura \cite{ka1} introduced the notion of majority additive colorings as follows.
Let $G$ be a graph, let $k$ be a positive integer, let $c:V(G)\to [k]$, and let 
$$s_c:V(G)\to\mathbb{N}:u\mapsto \sum\limits_{v\in N_G(u)}c(v).$$
Now, the function $c$ is a {\it majority additive $k$-coloring} of $G$ if
there is no vertex $u$ of $G$ and no integer $s$ such that 
\begin{itemize}
\item the degree of $u$ is at least $2$, and 
\item $s_c(v)=s$ for more than half the neighbors $v$ of $u$.
\end{itemize}
Majority additive colorings are a variation of some well studied notions that received considerable attention.
Majority colorings of the vertices or edges of finite or infinite graphs \cite{lo,bo,ha,ka,pe} or digraphs \cite{kr,kn,an}
are well studied objects with challenging open problems.
Similarly, additive colorings based on labelings of the vertices or edges of graphs 
were investigated \cite{kar,cz,ba} also motivated by challenging problems \cite{ke,pr,ad,kal}.

As pointed out by Kamyczura, 
a graph $G$ does not have a majority additive $k$-coloring for any choice of $k$ if and only if
\begin{eqnarray}\label{e1}
\exists u\in V(G):\exists R\subseteq N_G(u):2|R|>d_G(u)>1\mbox{ and }\forall v,w\in R:N_G(v)=N_G(w).
\end{eqnarray}
In fact, a vertex $u$ as in (\ref{e1}) is clearly an obstruction 
to the existence of a majority additive $k$-coloring $c$ of $G$, 
because $s_c(v)=s_c(w)$ for all vertices $v$ and $w$ in the set $R$.
On the other hand, if (\ref{e1}) does not hold and $V(G)=\{ u_1,\ldots,u_n\}$,
then setting $c(u_i)=2^{i-1}$ for $i$ in $[n]$
yields a majority additive $2^{n-1}$-coloring $c$ of $G$.
We call a graph $G$ {\it good} if (\ref{e1}) does not hold.
The {\it majority additive chromatic number $\chi_{mac}(G)$} of a good graph $G$
is the smallest $k$ such that $G$ has a majority additive $k$-coloring.

Kamyczura \cite{ka2} conjectured that 
$\chi_{mac}(G)\leq \Delta(G)+1$ 
for every good graph $G$.
He even conjectured \cite{ka2} that 
$\chi_{mac}(G)\leq 3$ 
for every good graph $G$ distinct from $K_4$. 
In Section \ref{sec2} we disprove these conjectures 
but show that
$\chi_{mac}(G) \leq 2\Delta(G)(\Delta(G)-1)+1$ for every good graph $G$  
and that 
$\chi_{mac}(G) \leq 4e^3\cdot\Delta(G)^{\frac{4}{\left\lfloor\delta(G)/2\right\rfloor}}$ 
for every good graph $G$ satisfying an additional local condition. 
This leads us to believe that $\chi_{mac}(G) \in \mathcal{O}(\Delta(G))$ for every good graph $G$. 
Section \ref{sec3} is dedicated to the complexity of $\chi_{mac}(G)$. 
We show that $k$-\textsc{Mac}, 
the problem to decide whether $\chi_{mac}(G) \leq k$ for a given graph $G$, 
is \textbf{NP}-complete for all $k\geq 2$.

\section{Bounds on $\chi_{mac}$}\label{sec2}
As mentioned in the introduction, we begin by disproving Kamyczura's conjectures.

\begin{proposition}
There are infinitely many good graphs $G$ with $\chi_{mac}(G) \geq 2\Delta(G)+1$.
\end{proposition}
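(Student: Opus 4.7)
The plan is to exhibit an explicit family $(G_n)_{n\ge 1}$ of good graphs with $\chi_{mac}(G_n)\ge 2\Delta(G_n)+1$. I would proceed in three stages.

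First, I would design a family whose extremal behaviour is driven by a high-degree hub vertex $u$ surrounded by many near-twin neighbours. Concretely, I would place $u$ at the centre and attach pairs $\{v_i,v_i'\}$ whose open neighbourhoods differ in a single private pendant, so that the difference $s_c(v_i)-s_c(v_i')$ depends only on that pendant's colour. To prevent condition~(\ref{e1}) at $u$ or at any other vertex, the private pendants are chosen to be pairwise distinct, so that no subset of more than half of any neighbourhood consists of exact twins.

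Second, I would verify~(\ref{e1}) at every vertex of $G_n$ by checking that the near-twin pairs are insufficient in number to form a twin-majority. The private pendants are designed precisely to obstruct exact-twin majorities; explicit counting at $u$, at the $v_i$ and $v_i'$, and at the pendants themselves should confirm good-ness.

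Third, and this is the heart of the argument, I would prove $\chi_{mac}(G_n)\ge 2\Delta(G_n)+1$ by contradiction. Assuming that a majority additive $k$-coloring $c$ with $k\le 2\Delta(G_n)$ exists, I would analyse the sums $s_c(v)$ for $v\in N_{G_n}(u)$ at the hub. The majority additive constraint forbids any sum value from being shared by more than $\Delta(G_n)/2$ neighbours, while the near-twin pairs tightly couple those sum values to the private pendant colours. A pigeonhole argument combining the forced number of distinct sums with the limited supply of colours should then yield the contradiction. A cleaner alternative would be to tune the construction so that the neighbours of $u$ admit only very few distinct sum values under any $k$-coloring, and then rule out $k\le 2\Delta(G_n)$ by a direct counting estimate.

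The main obstacle I anticipate is reconciling good-ness with rigidity: good-ness requires that no large block of $N_{G_n}(u)$ consists of exact twins, while the lower-bound argument benefits from tightly coupling the sums at neighbours of $u$. Finding a construction that simultaneously achieves both, and then carrying through the detailed counting at the hub, is where the main technical content lies.
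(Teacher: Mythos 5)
Your plan has a genuine gap at its core: a pigeonhole argument at a high-degree hub cannot certify a lower bound anywhere near $2\Delta+1$. The majority additive condition at a vertex $u$ of degree $\Delta$ only forbids a single $s_c$-value from occurring on more than $\Delta/2$ of its neighbours, so it is already satisfied once the neighbours of $u$ realise about three distinct sum values. Even in the most rigid version of your gadget --- each neighbour $v_i$ of $u$ having exactly the two neighbours $u$ and a private pendant $p_i$, so that $s_c(v_i)=c(u)+c(p_i)$ --- pigeonhole over $k$ colours only guarantees some sum value on at least $\Delta/k$ neighbours, and $\Delta/k>\Delta/2$ forces merely $k<2$. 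So a single hub can never rule out even $k=2$, let alone $k\leq 2\Delta$. Your ``cleaner alternative'' hits the same wall: to create a violation at $u$ by counting you would need its neighbours to admit essentially only one possible sum value under every colouring, which is exactly the twin situation (\ref{e1}) that destroys goodness. The constraints that are actually strong enough are those at vertices of degree $2$ or $3$, where ``more than half'' means ``at least two'', i.e.\ all neighbours must receive pairwise distinct $s_c$-values.

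What is needed --- and what the paper does --- is to convert many such low-degree constraints into pairwise distinctness of the \emph{colours themselves} on a large vertex set, while keeping every degree small. The paper takes a Steiner triple system $S(2,3,n)$ on points $u_1,\dots,u_n$ and replaces each triple $e=\{u_x,u_y,u_z\}$ by a gadget containing a degree-$3$ vertex $w^e$ whose three neighbours satisfy $s_c(v_x^e)=c(u_x)+c(w^e)$, and similarly for $y$ and $z$; the condition at $w^e$ then forces $c(u_x),c(u_y),c(u_z)$ to be pairwise distinct. Since every pair of points lies in some triple, all $n$ values $c(u_i)$ must differ, so $k\geq n$, while each $u_i$ lies in only $(n-1)/2$ triples, giving $\Delta(G)=(n-1)/2$ and hence $n=2\Delta(G)+1$. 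Any repair of your construction would have to incorporate an analogous design-theoretic covering of all $\binom{2\Delta+1}{2}$ pairs by low-degree gadgets; no tuning of a single hub with near-twin neighbours will achieve this.
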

\begin{proof}
Let $n$ be a positive integer at least $7$ that is $1$ or $3$ modulo $6$.
Let the hypergraph $H$ with vertex set $\{ u_1,\ldots,u_n\}$ 
be a {\it Steiner triple system $S(2,3,n)$},
that is, $H$ is a $3$-uniform hypergraph 
such that, for every two distinct $x$ and $y$ in $[n]$,
there is exactly one hyperedge $e$ of $H$ that contains $u_x$ and $u_y$.
It is well known that the parity conditions on $n$ imply the existence of $S(2,3,n)$.
Now, let the graph $G$ arise from $H$ by replacing 
every hyperedge $e=\{ u_x,u_y,u_z\}$
by four new vertices 
$v_x^e$, $v_y^e$, $v_z^e$, and $w^e$
and six new edges 
$u_xv_x^e$,
$u_yv_y^e$,
$u_zv_z^e$,
$v_x^ew^e$,
$v_y^ew^e$, and
$v_z^ew^e$.
If $c:V(G)\to [k]$ is such that $c(u_x)=c(u_y)$ for distinct $x$ and $y$ in $[n]$
and $e=\{ u_x,u_y,u_z\}$ is a hyperedge of $H$,
then $s_c(v_x^e)=s_c(v_y^e)$,
that is, two of the three neighbors of $w^e$
have the same $s_c$-value,
which implies that $c$ is not a majority additive $k$-coloring.
It follows that 
$\chi_{mac}(G)\geq n=2\Delta(G)+1$.
\end{proof}
Next, we show that $\chi_{mac}$ grows at most quadratically in the maximum degree. 
This is the first sub-exponential upper bound 
as Kamyczura \cite{ka1} proved $\chi_{mac}(G) \leq 2^{n(G)}$ for every good graph $G$.
\begin{theorem}\label{theorem1}
If $G$ is a good graph of maximum degree $\Delta$, 
then $\chi_{mac}(G)\leq 2\Delta(\Delta-1)+1$.
\end{theorem}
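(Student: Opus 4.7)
The plan is to first reduce the problem to a pairwise sum-distinguishing condition, and then construct the coloring greedily in a reverse order on the vertices.

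For the reduction, I would argue that it suffices to find $c:V(G)\to[k]$ such that $s_c(v)\neq s_c(w)$ for every pair $\{v,w\}$ of vertices that share some common neighbor of degree at least $2$ and satisfy $N_G(v)\neq N_G(w)$. Indeed, under this condition, for every $x$ with $d_G(x)\geq 2$, two neighbors of $x$ in the same $s_c$-class must be twins, so the $s_c$-equivalence classes in $N_G(x)$ refine the twin-equivalence classes of $N_G(x)$. Since $G$ is good, no twin class of $N_G(x)$ has size exceeding $d_G(x)/2$, so the same bound holds for every $s_c$-class, which is exactly the majority additive condition at $x$.

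For the coloring I would fix an arbitrary ordering $v_1,\ldots,v_n$ of $V(G)$ and color the vertices in reverse order $v_n,v_{n-1},\ldots,v_1$. The identity
\[
s_c(v)-s_c(w)=\sum_{z\in N_G(v)\setminus N_G(w)}c(z)-\sum_{z\in N_G(w)\setminus N_G(v)}c(z)
\]
shows that the constraint at $\{v,w\}$ depends only on the colors of the vertices in $N_G(v)\triangle N_G(w)$. Call a relevant pair $\{v,w\}$ \emph{critical at} $v_i$ if $v_i$ is the minimum-indexed vertex of $N_G(v)\triangle N_G(w)$. At step $i$ only pairs critical at $v_i$ can impose a constraint on $c(v_i)$, and each such pair rules out exactly one value in $[k]$, since after the previous steps the linear form above reduces to a fixed constant plus $\pm c(v_i)$.

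The central technical step is then to show that every vertex $y$ is critical for at most $2\Delta(\Delta-1)$ pairs, which would yield a valid choice for $c(v_i)$ at each step as long as $k\geq 2\Delta(\Delta-1)+1$. For the count, fix $y$ and separate by which side of the symmetric difference contains $y$; by symmetry it is enough to show that there are at most $\Delta(\Delta-1)$ critical pairs with $y\in N_G(v)\setminus N_G(w)$, the opposite orientation contributing the factor $2$. In this case $v$ ranges over the at most $\Delta$ vertices in $N_G(y)$, and the minimality of $y$ in $N_G(v)\triangle N_G(w)$ should restrict the number of admissible $w$ (which must share a common neighbor with $v$ but not be adjacent to $y$) to at most $\Delta-1$.

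The main obstacle is precisely this last counting. Without the minimum-index condition, a single vertex $y$ can lie in the symmetric difference $N_G(v)\triangle N_G(w)$ of $\Omega(\Delta^3)$ distinct pairs, which is far too many. Saving a full factor of $\Delta$ by exploiting the minimality is the delicate part, and I expect the argument to depend on a careful case analysis according to whether $v$ and $w$ are adjacent, using the fact that any element of $N_G(v)\triangle N_G(w)$ other than $y$ has been colored strictly earlier in the reverse order to bound the number of eligible $w$'s structurally.
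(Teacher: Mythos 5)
There is a genuine gap, and it sits exactly where you flag it --- but the situation is worse than ``delicate'': the counting claim you need is false for an arbitrary ordering. A pair $\{v,w\}$ is critical at $y$ whenever $y$ is the minimum-indexed vertex of $N_G(v)\triangle N_G(w)$, and for the vertex colored last this condition is vacuous: \emph{every} relevant pair whose symmetric difference contains that vertex is critical there. Fixing $y$ and $v\in N_G(y)$, the admissible $w$ (sharing a common neighbor with $v$, not adjacent to $y$, not a twin of $v$) can number $\Omega(\Delta^2)$, so a single vertex can carry $\Omega(\Delta^3)$ critical pairs and hence $\Omega(\Delta^3)$ forbidden colors. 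The minimality condition is a statement about the ordering, not about the structure of $G$, so it cannot by itself save the factor of $\Delta$ you need; an averaging argument over orderings also fails, since the total number of (pair, vertex-of-symmetric-difference) incidences is already $\Theta(n\Delta^3)$ in the worst case. It is moreover unclear that your strengthened target --- $s_c(v)\neq s_c(w)$ for \emph{all} non-twin pairs with a common neighbor --- is achievable with $O(\Delta^2)$ colors at all; your (correct) reduction replaces the majority condition by a strictly stronger pairwise-distinction condition, and that is precisely where the leverage is lost.

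The paper's proof keeps the majority condition throughout and exploits it quantitatively. It also recolors greedily (starting from the exponential coloring $c(u_i)=2^{i-1}$), but when recoloring $u$ it only needs to avoid those shifts $\delta$ of $c(u)$ that create, in the neighborhood of some $w$, an $s_c$-class of size exceeding $d_G(w)/2$. Writing $A=N_G(u)\cap N_G(w)$ and $B=N_G(w)\setminus A$ and partitioning each into $s_c$-classes $A_1,\ldots,A_p$ and $B_1,\ldots,B_q$, a dangerous shift corresponds to a pair $(i,j)$ with $|A_i|+|B_j|>d_G(w)/2$, and each such pair forbids exactly one $\delta$. Because the blocks must be large, the set of dangerous pairs lies in a single row or column of $[p]\times[q]$, and its size is at most $2|A|$. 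Charging the forbidden values to the at most $\Delta(\Delta-1)$ edges between $N_G(u)$ and the affected vertices $w$ gives at most $2\Delta(\Delta-1)$ forbidden shifts in total. This ``few large classes'' argument is the idea your proposal is missing; without it, pairwise distinction forces you to pay for every pair separately.
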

\begin{proof}
Let $V(G)=\{ u_1,\ldots,u_n\}$ and initialize $c(u_i)$ as $2^{i-1}$ for $i$ in $[n]$.
As noted above, $c$ is a majority additive coloring of $G$.
The proof relies on a greedy (re-)coloring procedure:
\begin{quote}
{\it Consider the vertices $u$ of $G$ in any linear order and
greedily change the value $c(u)$ to the smallest positive integer 
such that $c$ remains a majority additive coloring of $G$.}
\end{quote}
In order to complete the proof, 
we argue that each $c(u)$ is at most $2\Delta(\Delta-1)+1$ after the change.

Let $c$ be some intermediate majority additive coloring of $G$.
Suppose that $u$ is a vertex whose $c$-value our greedy procedure currently changes
and that $w$ is a vertex that is affected by this change,
that is, changing $c(u)$ to $c(u)+\delta$ may cause more than half the neighbors of $w$ to have the same $s_c$-value.
This necessarily requires that the two sets
$A=N_G(u)\cap N_G(w)$ 
and $B=N_G(w)\setminus A$ are both non-empty.
Let 
${\cal A}$ and ${\cal B}$ be the partitions of $A$ and $B$, respectively,
into the sets of the equal $s_c$-values,
that is,
${\cal A}$ is the partition of $A$ 
and
${\cal B}$ is the partition of $B$ 
that are induced by the partition of $V(G)$ 
given by the non-empty sets of the form $s_c^{-1}(k)$ with $k\in\mathbb{N}$.
Let 
$|{\cal A}|=p$,
$|{\cal B}|=q$,
\begin{eqnarray*}
{\cal A}&=&\{ A_1,\ldots,A_p\}\mbox{ with $|A_1|\geq \ldots \geq |A_p|$, and }\\
{\cal B}&=&\{ B_1,\ldots,B_q\}\mbox{ with $|B_1|\geq \ldots \geq |B_q|$.}
\end{eqnarray*}
Let $c':V(G)\to\mathbb{N}$ arise from $c$ 
by changing $c(u)$ to $c(u)+\delta$ for some $\delta\in\mathbb{Z}$.
Note that $s_{c'}(v)=s_{c}(v)+\delta$ for every $v$ in $A$
and $s_{c'}(v)=s_{c}(v)$ for every $v$ in $B$.

Now, more than half the neighbors of $w$ have the same $s_{c'}$-value, say $s$,
if and only if there is some $i$ in $[p]$ and some $j$ in $[q]$ such that 
$s_{c'}(v)=s$ for every $v$ in $A_i\cup B_j$ and
$|A_i|+|B_j|>d_G(w)/2$.
In particular, we have $\delta=s_{c}(v')-s_{c}(v)$ 
for every $v$ in $A_i$ and $v'$ in $B_j$,
that is, the specific value of $\delta$ that leads to this violation is uniquely
determined by the indices $i$ and $j$.
Let 
$$P=\left\{ (i,j)\in [p]\times [q]:|A_i|+|B_j|>\frac{d_G(w)}{2}\right\}.$$
Since 
$$d_G(w)=|A|+|B|=|A_1|+\cdots+|A_p|+|B_1|+\cdots+|B_q|,$$
it follows easily that
\begin{itemize}
\item either $P\subseteq \{ (1,j):j\in [q]\}$,
in which case we say that $w$ is of {\it type $1$}
\item or $P\subseteq \{ (i,1):i\in [p]\}$ and $P\not=\{ (1,1)\}$,
in which case we say that $w$ is of {\it type $2$}.
\end{itemize}
Our next goal is to bound $|P|/|A|$.

Let $d=d_G(w)$.

First, suppose that $w$ is of type $1$.
Let $\ell=|A_1|$.
This implies $|B|\leq d-\ell$.
Since $c$ is a majority additive coloring of $G$, 
we have $1\leq \ell\leq \left\lfloor\frac{d}{2}\right\rfloor$.
If $(1,j)\in P$, then $|A_1|+|B_j|=\ell+|B_j|>d/2$,
which implies $|B_j|\geq \left\lfloor\frac{d}{2}\right\rfloor-\ell+1$,
and, hence, $|P|\leq \frac{d-\ell}{\left\lfloor\frac{d}{2}\right\rfloor-\ell+1}$.
Therefore, we obtain that
\begin{eqnarray}\label{e2}
\frac{|P|}{|A|}&\leq &\max\left\{\frac{d-\ell}{\ell\left(\left\lfloor\frac{d}{2}\right\rfloor-\ell+1\right)}:
1\leq \ell\leq \left\lfloor\frac{d}{2}\right\rfloor\right\}
=\frac{d-1}{\left\lfloor\frac{d}{2}\right\rfloor}\leq 2.
\end{eqnarray}
Next, suppose that $w$ is of type $2$.
Let $d-\ell=|B_1|$.
This implies $|A|\leq \ell$.
Since $c$ is a majority additive coloring of $G$, 
we have $1\leq d-\ell\leq \left\lfloor\frac{d}{2}\right\rfloor$,
and, hence, $\left\lceil\frac{d}{2}\right\rceil\leq \ell\leq d-1$.
If $(i,1)\in P$, then $|A_i|+|B_1|=|A_i|+d-\ell>d/2$,
which implies $|A_i|\geq \ell-\left\lceil\frac{d}{2}\right\rceil+1$,
and, hence, 
$|P|\leq \frac{|A|}{\ell-\left\lceil\frac{d}{2}\right\rceil+1}\leq \frac{\ell}{\ell-\left\lceil\frac{d}{2}\right\rceil+1}$.
Therefore, we obtain that
\begin{eqnarray}\label{e3}
\frac{|P|}{|A|}&\leq &\max\left\{\frac{1}{\ell-\left\lceil\frac{d}{2}\right\rceil+1}:
\left\lceil\frac{d}{2}\right\rceil\leq \ell\leq d-1\right\}
=1.
\end{eqnarray}
Recall that, when considering the vertex $u$, 
our greedy approach explained at the beginning of the proof 
changes $c(u)$ to the smallest positive integer $c(u)+\delta$
such that $c$ remains a majority additive coloring of $G$.
Therefore, we need to bound the number of values $\delta$ in $\mathbb{Z}$
such that changing $c(u)$ to $c(u)+\delta$ 
leads to a violation within the neighborhood of some vertex $w$.
If this is the case, then, clearly, the two sets 
$A=N_G(u)\cap N_G(w)$ and $B=N_G(w)\setminus A$ 
are both non-empty and --- as explained above --- the vertex $w$ 
is either of type 1 or type 2.
Let $P$ be as above.
Note that there are at most $|P|$ values of $\delta$
such that changing $c(u)$ to $c(u)+\delta$
leads to a violation at $w$,
that is, to the situation that more than half the neighbors of $w$ 
have the same $s_c$-value after the change.
In order to properly count the total number of forbidden values for $\delta$,
we charge these at most $|P|$ forbidden values 
caused by violations at $w$ 
to the $|A|$ pairs $(w,e)$ where $e$ is an edge between $w$ and $A$.
By (\ref{e2}) and (\ref{e3}), 
the charge $\frac{|P|}{|A|}$ on each pair $(w,e)$ is at most $2$.
Since $G$ has maximum degree $\Delta$,
there are at most $\Delta(\Delta-1)$ pairs $(w,e)$
such that $w$ is a vertex of $G$
and $e$ is an edge of $G$ between $w$ and some neighbor of $u$.
Since each such pair receives at most a charge of $2$,
the total charge over all possible pairs $(w,e)$ is at most $2\Delta(\Delta-1)$.
This implies that when our greedy (re-)coloring procedure considers $u$,
there are at most $2\Delta(\Delta-1)$ values of the change $\delta$ of $c(u)$
that lead to some violation.
Hence, at the end of the (re-)coloring,
we have $c(u)\leq 2\Delta(\Delta-1)+1$ for every vertex $u$ of $G$ as claimed.
\end{proof}

A graph $G$ is said to satisfy the \textit{private neighbor condition} if 
\begin{eqnarray}\label{e4}
\forall u \in V(G): d_G(u)\geq 2\Rightarrow \forall v \in N_G(u):\exists w \in N_G(v): N_G(w) \cap N_G[u] = \{ v\},
\end{eqnarray} 
in which case we say that $w$ is a \textit{private neighbor} of $v$ in $N_G(u)$. 
Restricting to graphs that satisfy \eqref{e4}, 
we can improve the upper bound to sublinear in $\Delta(G)$ for graphs $G$ with large enough minimum degree. 

\begin{theorem}\label{theorem2}
Let $G$ be a graph of maximum degree $\Delta$ that satisfies the private neighbor condition \eqref{e4} and 
let $\delta = \min\left\{d_G(u):u \in V(G)\mbox{ and }d_G(u)\geq 2\right\}$. 
Then $G$ is good and $\chi_{mac}(G) \leq 4e^3 \cdot \Delta^{\frac{4}{\left\lfloor\delta/2\right\rfloor}}$.
\end{theorem}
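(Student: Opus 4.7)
The plan is to apply the symmetric Lov\'asz Local Lemma (LLL) to a uniformly random coloring $c:V(G)\to [k]$ with $k=\lceil 4e^3\Delta^{4/\lfloor\delta/2\rfloor}\rceil$. First, the goodness of $G$ follows immediately from the private neighbor condition: if (\ref{e1}) held at some $u$, we could pick two distinct $v,v'\in R$ with $N_G(v)=N_G(v')$; applying (\ref{e4}) at $u$ to $v$ yields some $w\in N_G(v)=N_G(v')$ with $N_G(w)\cap N_G[u]=\{v\}$, so $v'\in N_G(w)\cap N_G[u]=\{v\}$ forces $v'=v$, a contradiction. We may also assume some vertex has degree at least $2$, otherwise any $1$-coloring works.

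For each vertex $u$ of $G$ with $d:=d_G(u)\geq 2$ let $B_u$ be the bad event that some integer $s$ is taken by $s_c(v)$ for more than $d/2$ neighbors $v$ of $u$. Putting $m=\lfloor d/2\rfloor+1$, a union bound over the $\binom{d}{m}$ subsets $R=\{v_1,\dots,v_m\}\subseteq N_G(u)$ of size $m$ reduces the task to bounding $\Pr[s_c(v_1)=\cdots=s_c(v_m)]$ for a single $R$. I would use (\ref{e4}) at $u$ to pick, for each $i\in\{2,\dots,m\}$, a vertex $w_i\in N_G(v_i)$ with $N_G(w_i)\cap N_G[u]=\{v_i\}$. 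The defining property of $w_i$ immediately implies that $w_2,\dots,w_m$ are pairwise distinct and that, for all $i$ and all $j\neq i$, the vertex $w_i$ is not adjacent to $v_j$, and in particular not to $v_1$. Conditioning on the colors of every vertex outside $\{w_2,\dots,w_m\}$, the value $s_c(v_1)$ becomes deterministic, each $s_c(v_i)$ becomes $c(w_i)$ plus a deterministic shift, and $c(w_2),\dots,c(w_m)$ remain independent and uniform on $[k]$. Hence the conditional probability that $s_c(v_i)=s_c(v_1)$ for every $i\geq 2$ is at most $k^{-(m-1)}$, and averaging yields
\[
\Pr[B_u]\leq \binom{d}{m}k^{-(m-1)}\leq 2^d k^{-\lfloor d/2\rfloor}.
\]

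The event $B_u$ is determined by the colors of vertices at graph distance at most $2$ from $u$, so $B_u$ and $B_{u'}$ can be dependent only when $u$ and $u'$ are at graph distance at most $4$, and the dependency degree $D$ hence satisfies $D+1\leq 1+\Delta+\Delta^2+\Delta^3+\Delta^4\leq 2\Delta^4$ (for $\Delta\geq 2$). The LLL condition $e\Pr[B_u](D+1)\leq 1$ therefore reduces to $k^{\lfloor d/2\rfloor}\geq 2e\cdot 2^d\Delta^4$. Since $d\geq\delta$ gives $\lfloor d/2\rfloor\geq\lfloor\delta/2\rfloor$, combining $4^{\lfloor d/2\rfloor}\geq 2^{d-1}$, $e^{3\lfloor d/2\rfloor}\geq e^3$, and $e^2\geq 4$, one checks for $k=4e^3\Delta^{4/\lfloor\delta/2\rfloor}$ that $k^{\lfloor d/2\rfloor}\geq 2^{d-1}e^3\Delta^4\geq 2^{d+1}e\Delta^4$, as required. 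The LLL then produces a coloring that avoids every $B_u$, which is precisely a majority additive $k$-coloring of $G$.

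The main obstacle is the probability estimate in the second paragraph. The $s_c$-values of the neighbors of $u$ are generally heavily correlated through shared second-neighbors, and without (\ref{e4}) one cannot decouple the $m-1$ collision events to extract an independent factor of $1/k$ per collision. The private neighbors $w_2,\dots,w_m$ serve as independent perturbation knobs, one for each potential coincidence $s_c(v_i)=s_c(v_1)$, and this independence is exactly what powers the $k^{-(m-1)}$ bound; every other step, including the standard distance-four dependency count and the final LLL arithmetic, is routine.
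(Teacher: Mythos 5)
Your proof is correct, and it follows the same overall strategy as the paper: a uniformly random coloring with the same value of $k$, the private neighbor condition supplying the independence needed to control the collision probabilities, a distance-four dependency graph, and the symmetric Lov\'asz Local Lemma. The one place where you genuinely diverge is the estimate of $\mathbb{P}[B_u]$. The paper first fixes a target value $s$, which forces it to reduce the sums modulo $k$ (so that the union bound over $s$ only costs a factor of $k$ rather than a factor of order $\Delta k$), shows $X_s^u\sim\mathrm{Bin}(d_G(u),1/k)$, and then applies Markov's inequality to the $r$-th factorial moment with $r=\left\lfloor d/2\right\rfloor+1$, arriving at $k\cdot\binom{d}{r}k^{-r}$. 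You instead union-bound over the $\binom{d}{m}$ subsets of $N_G(u)$ of size $m=\left\lfloor d/2\right\rfloor+1$ and bound the probability that all $m$ sums coincide by $k^{-(m-1)}$, using the private neighbors $w_2,\dots,w_m$ as independent fresh randomness after conditioning; this gives $\binom{d}{m}k^{-(m-1)}$, which is the same quantity up to a bounded factor. Your route is slightly more elementary --- it needs neither the modular reduction nor the factorial-moment form of Markov's inequality --- and it also makes the independence argument more transparent than the paper's brief appeal to pairwise independence, which as literally stated does not by itself justify the binomial distribution (the full conditional independence you spell out is what is really being used). The final LLL arithmetic checks out in both versions.
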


For the proof, we need two well-known results.
\begin{lemma}[Extended Markov's inequality]\label{markov}
Let $X$ be a non-negative random variable, 
let $a>0$, and 
let $\varphi$ be a non-decreasing function such that $\varphi(a)>0$. 
Then $\mathbb{P}\left[X \geq a\right] \leq \frac{\mathbb{E}[\varphi\left(X\right)]}{\varphi(a)}.$
\end{lemma}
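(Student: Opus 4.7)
The plan is to establish the inequality by a pointwise comparison followed by taking expectations, essentially mimicking the standard proof of Markov's inequality with $\varphi$ inserted in the role of the identity map. First I would make the key monotonicity observation: since $\varphi$ is non-decreasing, the event $\{X \geq a\}$ is contained in the event $\{\varphi(X) \geq \varphi(a)\}$, so pointwise
$$\varphi(X) \cdot \mathbf{1}_{\{X \geq a\}} \geq \varphi(a) \cdot \mathbf{1}_{\{X \geq a\}}.$$

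Next I would take expectations on both sides. The right-hand side is simply $\varphi(a) \cdot \mathbb{P}[X \geq a]$. For the left-hand side, the natural setting (implicit in the hypotheses that $X \geq 0$ and $\varphi(a) > 0$ together with $\varphi$ being non-decreasing) is that $\varphi$ is non-negative on the range of $X$; then $\mathbb{E}\bigl[\varphi(X) \cdot \mathbf{1}_{\{X \geq a\}}\bigr] \leq \mathbb{E}\bigl[\varphi(X)\bigr]$ by simply dropping the indicator. Chaining these two steps gives $\mathbb{E}[\varphi(X)] \geq \varphi(a) \cdot \mathbb{P}[X \geq a]$, and dividing through by $\varphi(a) > 0$ delivers the claimed bound $\mathbb{P}[X \geq a] \leq \mathbb{E}[\varphi(X)]/\varphi(a)$.

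There is essentially no obstacle beyond being careful about signs: the proof is a two-line computation once the monotonicity comparison is written down. The only mild subtlety is the treatment of $\varphi$ on $[0, a)$; the bound is intended in the regime where $\varphi(X)$ contributes non-negatively to $\mathbb{E}[\varphi(X)]$ (as happens in the typical applications with $\varphi(x) = t^x$ for $t > 1$ or $\varphi(x) = x^k$ for $k \geq 1$), and in the fully general case one can simply replace $\varphi$ by $\max\{\varphi, 0\}$, which leaves $\varphi(a)$ unchanged and only decreases $\mathbb{E}[\varphi(X)]$, thereby tightening the stated inequality.
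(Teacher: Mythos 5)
The paper states this lemma as a well-known fact and provides no proof, so there is no argument of the authors' to compare against; your proof is the standard one and is essentially correct. The pointwise inequality $\varphi(X)\cdot\mathbf{1}_{\{X\ge a\}}\ge\varphi(a)\cdot\mathbf{1}_{\{X\ge a\}}$ follows from monotonicity, and dropping the indicator on the left is legitimate exactly when $\varphi(X)\ge 0$ almost surely --- a hypothesis you rightly identify as implicit. It does hold in the paper's application, where $\varphi$ is the falling factorial $(x)_r$ evaluated at a non-negative integer-valued random variable, so the lemma is used correctly there.

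One correction to your closing remark: replacing $\varphi$ by $\max\{\varphi,0\}$ \emph{increases} $\mathbb{E}[\varphi(X)]$ rather than decreasing it, so this substitution weakens the resulting bound and does not recover the stated inequality for $\varphi$ taking negative values. In fact, as literally stated the lemma is false without some positivity assumption on $\varphi$ over the range of $X$: take $\varphi(x)=x-10$, $a=11$, and $X\equiv 0$, so that $\mathbb{P}[X\ge a]=0$ while $\mathbb{E}[\varphi(X)]/\varphi(a)=-10$. This is a defect of the paper's formulation rather than of your argument, but the final sentence of your write-up should be dropped or the direction of the comparison fixed.
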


\begin{lemma}[Symmetric Lovász-Local-Lemma]\label{LLL}
Let $X_1,X_2,\ldots,X_m$ be random events each occurring with probability at most $p$. 
If the maximum degree of the dependency graph is at most $d$ and $ep(d+1) \leq 1$,
then, with non-zero probability, none of the $X_i$ occurs.
\end{lemma}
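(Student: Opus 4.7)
My plan is to apply the Symmetric Lovász Local Lemma (Lemma~\ref{LLL}) to a uniformly random coloring. Let $\alpha = \lfloor \delta/2\rfloor$, set $k = \lceil 4e^3 \Delta^{4/\alpha}\rceil$, and color each vertex of $G$ independently and uniformly at random from $[k]$. For every vertex $u$ with $d_G(u) \geq 2$, let $X_u$ be the event that some integer $s$ satisfies $s_c(v) = s$ for more than $d_G(u)/2$ neighbors $v$ of $u$; a coloring avoiding all $X_u$ is precisely a majority additive $k$-coloring. Before the probabilistic argument, note that $G$ is good: if two distinct $v, v' \in N_G(u)$ had $N_G(v) = N_G(v')$, then the private neighbor $w$ of $v$ in $N_G(u)$ would lie in $N_G(v') \cap N_G[u]$, giving $v' \in N_G(w) \cap N_G[u]$, contradicting $N_G(w) \cap N_G[u] = \{v\}$.

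The heart of the proof is the bound on $\mathbb{P}[X_u]$, where the private neighbor condition is used. Fix $u$ with $d_G(u) = d$, set $t = \lfloor d/2\rfloor + 1$, and for each $v \in N_G(u)$ pick a private neighbor $w_v$. Because $N_G(w_v) \cap N_G[u] = \{v\}$, the $w_v$ are pairwise distinct and lie outside $N_G[u]$, so $c(w_v)$ contributes to $s_c(v)$ but to no $s_c(v')$ with $v' \in N_G(u)\setminus\{v\}$. Condition on all colors other than $\{c(w_v) : v \in N_G(u)\}$: then $s_c(v) = c(w_v) + a_v$ for deterministic constants $a_v$, and the $c(w_v)$ remain independent and uniform on $[k]$. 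A union bound over the $\binom{d}{t}$ subsets $S \subseteq N_G(u)$ of size $t$ and over the at most $k$ admissible common values $s$ (those with $s - a_v \in [k]$ for every $v \in S$) yields
$$\mathbb{P}[X_u] \;\leq\; k \cdot \binom{d}{t} \cdot (1/k)^t \;\leq\; \frac{(2e)^t}{k^{t-1}},$$
using $\binom{d}{t} \leq (ed/t)^t$ and $t > d/2$. Equivalently, this is what Lemma~\ref{markov} with $\varphi(x) = \binom{x}{t}$ gives after bounding $\binom{\max_s Y_{u,s}}{t} \leq \sum_s \binom{Y_{u,s}}{t}$, where $Y_{u,s} = |\{v\in N_G(u) : s_c(v) = s\}|$, and then applying linearity of expectation.

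To close the LLL, I bound the dependency degree. Since $X_u$ depends only on colors in $N_G^{\leq 2}[u]$, the events $X_u$ and $X_{u'}$ can be dependent only when $d_G(u,u') \leq 4$. A direct count gives
$$|N_G^{\leq 4}[u]| \;\leq\; 1 + \Delta + \Delta(\Delta-1) + \Delta(\Delta-1)^2 + \Delta(\Delta-1)^3 \;=\; \Delta^4 - 2\Delta^3 + 2\Delta^2 + 1 \;\leq\; \Delta^4$$
for every $\Delta \geq 2$, so $D + 1 \leq \Delta^4$. The estimate $(2e)^t / k^{t-1}$ is decreasing in $t$ once $k > 2e$, so the extremal case is the smallest possible $t$, namely $t = \alpha + 1$. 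Substituting into $ep(D+1) \leq 1$ rearranges to $k^\alpha \geq 2e^2 (2e)^\alpha \Delta^4$, i.e. $k \geq 2e \cdot (2e^2 \Delta^4)^{1/\alpha}$, and using $(2e^2)^{1/\alpha} \leq 2e^2$ for $\alpha \geq 1$ this is implied by $k \geq 4e^3 \Delta^{4/\alpha}$. Lemma~\ref{LLL} then produces a coloring avoiding every $X_u$, as desired.

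The main obstacle is the probability bound on $X_u$. Without \eqref{e4} the variables $s_c(v)$ for $v \in N_G(u)$ share summands and are correlated in ways that destroy the clean $(1/k)^t$ per-configuration estimate; the private neighbors are precisely what turn the question into a balls-into-bins problem with independent uniform shifts, making the union bound (or equivalently the extended Markov step) tight enough for the LLL to close with the claimed constant $4e^3$ and exponent $4/\lfloor \delta/2\rfloor$.
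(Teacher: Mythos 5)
Your proposal does not prove the statement in question. The statement is the symmetric Lov\'asz Local Lemma itself (Lemma~\ref{LLL}), which the paper records as a known classical result; a proof of it would proceed, e.g., by the standard induction showing $\mathbb{P}\bigl[X_i \mid \bigcap_{j\in S}\overline{X_j}\bigr]\leq ep$ for every $i$ and every $S$ not containing $i$, and then expanding $\mathbb{P}\bigl[\bigcap_i \overline{X_i}\bigr]$ as a product of conditional probabilities (or alternatively by the entropy-compression/algorithmic route of Moser and Tardos). Your text instead \emph{assumes} Lemma~\ref{LLL} as a black box and uses it to derive Theorem~\ref{theorem2}; nowhere do you argue why $ep(d+1)\leq 1$ forces $\mathbb{P}\bigl[\bigcap_i \overline{X_i}\bigr]>0$. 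As a proof of the lemma, the submission is therefore entirely missing.

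For what it is worth, the argument you do give is essentially the paper's proof of Theorem~\ref{theorem2}: a uniformly random $k$-coloring, private neighbors supplying an independent uniform summand in each $s_c(v)$, a bound of the form $k\binom{d}{t}k^{-t}\leq (2e)^t/k^{t-1}$ on $\mathbb{P}[X_u]$, and a dependency degree of at most $\Delta^4-2\Delta^3+2\Delta^2$. The only cosmetic differences are that you use a direct union bound over $t$-subsets of $N_G(u)$ (restricting to the at most $k$ admissible target values $s$) where the paper reduces sums modulo $k$ and bounds factorial moments via the extended Markov inequality; these yield the same estimate. So if the intended target had been Theorem~\ref{theorem2}, your write-up would be correct and essentially the paper's approach --- but it does not address Lemma~\ref{LLL}.
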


\begin{proof}[Proof of Theorem \ref{theorem2}]
Clearly, the private neighbor condition \eqref{e4} implies that $G$ is good.

Let $k = \left\lceil 4e^3\cdot \Delta^{\frac{4}{\left\lfloor\delta/2\right\rfloor}}\right\rceil$. 
For every vertex $u$ of $G$, choose $c(u) \in [k]$ uniformly at random and define 
$$\hat s_c(u) = \sum_{v\in N_G(u)} c(v) \mod k.$$
Since 
$\left|\{v\in N_G(u):\hat s_c(v) \equiv s \mod k\}\right| \geq \left|\{v\in N_G(u):s_c(v) = s\}\right|$
for every vertex $u$ of $G$ and every integer $s$,
it suffices to show that, with positive probability, 
there is no vertex $u$ of degree at least $2$ 
such that more than half the neighbors of $u$ have the same $\hat s_c$-value.

For every integer $s$ in $[k-1]_0$ and every vertex $u$ of $V(G)$ of degree at least $2$,
let the random variable $X_s^u$ be the number of neighbors $v$ of $u$ with $\hat s_c(v) \equiv s \mod k$. 
Let $u$ be a vertex of degree at least $2$ and let $v$ be a neighbor of $u$.
Let $w$ be a private neighbor of $v$ in $N_G(u)$. 
For any integer $s$, the private neighbor condition \eqref{e4} implies that 
$$\hat s_c(v) \equiv s \mod k \iff c(w) \equiv s - \sum_{x \in N_G(v)\setminus \{w\}} c(x) \mod k.$$
Furthermore,
for every two distinct neighbors $v$ and $v'$ of $u$,
the events $\left[\hat s_c(v) \equiv s \mod k\right]$ and $\left[\hat s_c\left(v'\right) \equiv s \mod k\right]$ 
are pairwise independent
and occur with probability $\frac{1}{k}$,
which implies 
$X_s^u \sim \text{Bin}\left(d_G(u),\frac{1}{k}\right)$.
Lemma \ref{markov} with $\varphi$ chosen as the $r$-th factorial moment, implies 
\begin{eqnarray}\label{e6}
\mathbb{P}\left[X_s^u \geq \left\lfloor \frac{d_G(u)}{2}\right\rfloor + 1\right] 
\leq 
\frac{\mathbb{E}\left[\left(X_s^u\right)_r\right]}{\left(\left\lfloor \frac{d_G(u)}{2}\right\rfloor + 1\right)_r}.
\end{eqnarray}
It is easy to see that, for $Y \sim \text{Bin}(n,p)$, it holds that $\mathbb{E}[(Y)_r] = (n)_r\cdot p^r.$ 
Since $(r)_r=r!$, choosing $r = \left\lfloor \frac{d_G(u)}{2}\right\rfloor + 1$ in \eqref{e6} yields 
\begin{eqnarray*}
\mathbb{P}\left[X_s^u\geq \left\lfloor \frac{d_G(u)}{2}\right\rfloor + 1\right] 
\leq \frac{(d_G(u))_r}{r!\cdot k^r} 
= \frac{\binom{d_G(u)}{r}}{k^r} 
\leq \left(\frac{e\cdot d_G(u)}{k\cdot\left(\left\lfloor \frac{d_G(u)}{2}\right\rfloor + 1\right)}\right)^{\left\lfloor \frac{d_G(u)}{2}\right\rfloor + 1} 
\leq \left(\frac{2e}{k}\right)^{\left\lfloor \frac{d_G(u)}{2}\right\rfloor + 1}.
\end{eqnarray*}
Let $X_u$ be the event that there is some $s$ in $[k-1]_0$ 
such that more than half the neighbors of $u$ have $\hat s_c$-value $s$. 
Applying the union bound yields 
\begin{eqnarray*}
\mathbb{P}[X_u] 
&=& \sum_{s \in [k-1]_0} \mathbb{P}\left[X_s^u\geq \left\lfloor \frac{d_G(u)}{2}\right\rfloor + 1\right]\\ 
&\leq & k\cdot \left(\frac{2e}{k}\right)^{\left\lfloor \frac{d_G(u)}{2}\right\rfloor + 1} \\
&\leq & 2e\cdot \left(\frac{2e}{\left\lceil4e^3\cdot \Delta^{\frac{4}{\left\lfloor\delta/2\right\rfloor}}\right\rceil}\right)^{\left\lfloor\frac{\delta}{2}\right\rfloor}\\
& \leq & 2e\cdot\left(\frac{1}{\left(2e^2\right)^{\left\lfloor\delta/2\right\rfloor}\cdot \Delta^4}\right).
\end{eqnarray*}
An event $X_u$ is independent of the events $X_v$ for all $v$ of distance at least $5$ to $u$.
Therefore, the maximum degree of the dependency graph is at most 
$\Delta+\Delta(\Delta-1)+\Delta(\Delta-1)^2+\Delta(\Delta-1)^3
=\Delta^4-2\Delta^3+2\Delta^2$. 
By Lemma \ref{LLL}, a majority additive $k$-coloring exists if \[2e^2\cdot \frac{1}{\left(2e^2\right)^{\left\lfloor\delta/2\right\rfloor}\Delta^4}\cdot(\Delta^4-2\Delta^3+2\Delta^2+1) \leq 1\] which is satisfied as $\delta \geq 2$.
\end{proof}

\begin{corollary}
Let $G$ be a graph of maximum degree $\Delta$ and minimum degree $\delta \in \Omega(\log \Delta)$ that satisfies the private neighbor condition \eqref{e4}. Then $G$ is good and $\chi_{mac}(G)$ is bounded by a constant.
\end{corollary}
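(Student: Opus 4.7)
The plan is to derive the corollary directly from Theorem~\ref{theorem2} by observing that the exponent of $\Delta$ in its bound collapses to a bounded quantity once $\delta \in \Omega(\log \Delta)$. Since $G$ satisfies the private neighbor condition \eqref{e4}, Theorem~\ref{theorem2} immediately yields that $G$ is good and
\[
\chi_{mac}(G) \leq 4e^3 \cdot \Delta^{\frac{4}{\lfloor \delta/2 \rfloor}},
\]
so the only remaining task is to bound the right-hand side by an absolute constant that is independent of $G$.

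The central step will be an elementary logarithm manipulation. By the hypothesis $\delta \in \Omega(\log \Delta)$, there exist constants $c > 0$ and $\Delta_0$ such that $\delta \geq c \ln \Delta$ for every graph in the family with $\Delta \geq \Delta_0$; the base of the logarithm is immaterial and is absorbed into $c$. After enlarging $\Delta_0$ if needed, one may assume $\lfloor \delta/2 \rfloor \geq (c/4) \ln \Delta$, and hence
\[
\Delta^{\frac{4}{\lfloor \delta/2 \rfloor}} = \exp\!\left(\frac{4 \ln \Delta}{\lfloor \delta/2 \rfloor}\right) \leq \exp\!\left(\frac{16}{c}\right),
\]
which depends only on the $\Omega$-constant $c$. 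For the remaining regime $\Delta < \Delta_0$, the maximum degree is absolutely bounded, and Theorem~\ref{theorem1} supplies $\chi_{mac}(G) \leq 2\Delta_0(\Delta_0-1)+1$, again a constant. Taking the maximum of the two quantities yields a universal upper bound on $\chi_{mac}$ over the whole family.

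There is essentially no real obstacle beyond this unwinding of the $\Omega$-notation: Theorem~\ref{theorem2} carries all the combinatorial content, and the corollary simply repackages its bound in the asymptotic regime where the outer exponent $4/\lfloor \delta/2 \rfloor$ is itself $O(1/\log \Delta)$. The only point that merits a brief sanity check is the consistency of logarithm bases between the $\Omega$-estimate and the computation above, but since the final constant $\exp(16/c)$ is purely a function of the hidden constant $c$, no finer bookkeeping is required.
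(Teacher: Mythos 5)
Your proposal is correct and matches the intended argument: the paper states this corollary without proof precisely because it is the immediate unwinding of Theorem~\ref{theorem2} that you carry out, namely that $\delta\ge c\ln\Delta$ forces $\Delta^{4/\lfloor\delta/2\rfloor}\le e^{16/c}$ for large $\Delta$, with the finitely many small values of $\Delta$ handled by a crude constant bound. Your extra care with the floor and with the bounded-$\Delta$ regime (via Theorem~\ref{theorem1}, applicable since the private neighbor condition makes $G$ good) is exactly the right bookkeeping.
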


\section{Complexity of $\chi_{mac}$}\label{sec3}

In this section we discuss the complexity of the decision problem $k$-\textsc{Mac}.
Clearly, a graph $G$ has a majority additive $1$-coloring 
if and only if there is no vertex $u$ of $G$ such that more than half the neighbors of $u$ have the same degree.
Hence, $1$-\textsc{Mac} can be solved efficiently.

\begin{theorem}
$k$-\textsc{Mac} is \textbf{NP}-complete for every $k\geq 2$.
\end{theorem}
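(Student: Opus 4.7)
Membership in \textbf{NP} is immediate: given a candidate labeling $c:V(G)\to[k]$, one computes $s_c(v)$ for every $v\in V(G)$ in polynomial time and then checks, at each vertex $u$ of degree at least $2$, that no value is attained by $s_c$ on more than $\lfloor d_G(u)/2\rfloor$ neighbors of $u$. The real work is the \textbf{NP}-hardness reduction.

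The plan is to reduce from a standard \textbf{NP}-complete satisfaction problem. A convenient choice for the base case $k=2$ is \textsc{NAE-3-SAT}; for $k\geq 3$ one can either reduce instead from $k$-\textsc{Colorability}, or extend the $k=2$ construction by attaching local ``color-restricting'' gadgets to every variable vertex that force its color to lie in a fixed two-element subset of $[k]$. Given an instance $\varphi$ with variables $x_1,\dots,x_n$ and clauses $C_1,\dots,C_m$, I would build $G_\varphi$ from three kinds of pieces. A \emph{variable gadget} for each $x_i$ produces a vertex $v_i$ whose color is pinned to $\{1,2\}$ by carefully chosen pendants and thus encodes a Boolean value. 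A \emph{clause gadget} for each $C_j=\{\ell_j^1,\ell_j^2,\ell_j^3\}$ is centered at a degree-$3$ vertex $w_j$ whose three neighbors are linked, via auxiliary paths of chosen length and shape, to the literal-vertices, in such a way that the three sums $s_c$ at the neighbors of $w_j$ all coincide precisely when all three literals receive the same truth value. Since $w_j$ has degree $3$, ``more than half'' means all three; hence the majority additive condition at $w_j$ is satisfied if and only if the NAE constraint at $C_j$ holds. An \emph{equalizer gadget} on the variable-side guarantees that flipping the whole assignment gives a valid coloring, matching the symmetry of NAE-SAT.

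The main obstacle I expect is the clause gadget: one has to guarantee that the $s_c$-values at the three neighbors of $w_j$ are controlled entirely by the truth assignment, while simultaneously ensuring that no internal vertex of the gadget (for instance, an interior vertex of an auxiliary path) accidentally fails the majority condition and blocks otherwise valid colorings. The natural way to decouple these requirements is to equip each non-check vertex with private pendants (in the spirit of the private neighbor condition \eqref{e4}) whose colors can be freely tuned to shift the corresponding $s_c$-values, so that the only binding majority-additive constraint of the gadget is the one at $w_j$.

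Finally, correctness has two routine directions. A satisfying NAE-assignment of $\varphi$ yields an explicit majority additive $k$-coloring of $G_\varphi$ by plugging Boolean values into the variable gadgets and exploiting the freedom on the pendants; conversely, any majority additive $k$-coloring of $G_\varphi$ must, by the design of the check vertices $w_j$, induce a NAE-assignment of $\varphi$. One also verifies by construction that $G_\varphi$ avoids the obstruction \eqref{e1}, i.e., is good, for instance by attaching distinct private tails to any pair of would-be twins; this is cosmetic but necessary since only good graphs admit majority additive colorings at all.
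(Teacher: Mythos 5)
Your overall plan for $k=2$ points in the same direction as the paper's proof (a reduction from \textsc{Not-All-Equal $3$Sat} with variable gadgets, degree-bounded clause check vertices, and pendant structures that give enough freedom to keep the internal vertices innocent), but as written it is a blueprint rather than a proof, and the parts you defer are exactly where the work lies. Two gaps are concrete. First, you never explain how negation is encoded: a NAE clause contains literals, so you need a vertex $v_{\bar x}$ whose truth value is forced to be opposite to that of $v_x$ by the majority additive condition alone. The paper does this with a path $v_x a_x b_x a_{\bar x} v_{\bar x}$, where the degree-$2$ vertex $b_x$ forces $s_c(a_x)\neq s_c(a_{\bar x})$ and hence $c(v_x)\neq c(v_{\bar x})$; your single vertex $v_i$ per variable with ``pinned'' color has no such mechanism, and without it the backward direction of the reduction does not produce a consistent assignment. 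Second, ``equip each non-check vertex with private pendants whose colors can be freely tuned'' is not yet an argument: the pendants themselves are vertices of the graph, they change the degrees (and hence the majority thresholds) of the vertices they are attached to, and one must verify the condition at \emph{every} vertex of the finished gadget for an explicitly stated coloring. The paper's choice of exactly $d_\ell+3$ pendant paths per literal and its explicit assignment of colors to the $\alpha_i^\ell$ is what makes that verification go through; asserting that suitable pendants exist is not the same as exhibiting them.

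For $k\geq 3$ the gap is more serious. Your second option --- local gadgets that pin a vertex's color to a two-element subset of $[k]$ --- is doubtful in this setting: the majority additive condition only ever constrains the multiset of \emph{sums} $s_c$ over a neighborhood, never an individual color value, so it is not clear that any local gadget can restrict $c(v)$ itself (as opposed to $s_c$ of nearby vertices), and you give no construction. Your first option, reducing from \textsc{Graph $k$-Colorability}, is what the paper actually does, but it is not a one-liner: the paper subdivides every edge three times so that the middle vertex $y^e$ of the subdivided edge $uv$ has exactly two neighbors whose $s_c$-values differ iff $c(u)\neq c(v)$, and --- crucially --- the forward direction (building a majority additive $k$-coloring from a proper $k$-coloring) needs an extra ingredient, namely the existence of a majority $3$-edge-coloring of $G$ (Theorem 2 of \cite{bo}) to color the vertices $y^e$ so that the original vertices $u$ of $G$ do not themselves violate the condition. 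Nothing in your sketch supplies that step, and without it the easy direction of the reduction is not established.
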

\begin{proof} 
Clearly, $k$-\textsc{Mac} is in \textbf{NP} for every $k$.

For $k=2$, we reduce from the \textbf{NP}-complete problem \textsc{Not-All-Equal $3$Sat}, cf. [LO3] in \cite{gajo}.
Let $F$ be an instance of \textsc{Not-All-Equal $3$Sat} consisting of $m$ clauses in $n$ boolean variables. 
For a literal $\ell$, let $d_\ell$ denote the number of clauses of $F$ that contain $\ell$.
Starting with the empty graph, we construct a graph $G$ as follows. 
See Figure \ref{fig4} for an illustration.
\begin{itemize}
\item For every variable $x$, 
\begin{itemize}
\item add the path $v_xa_xb_xa_{\bar{x}}v_{\bar{x}}$ of order five and 
\item add the two vertices $\alpha_i^\ell$ and $\beta_i^\ell$ as well as 
the two edges $\alpha_i^\ell\beta_i^\ell$ and $\beta_i^\ell v_\ell$ 
for every $\ell\in \{ x,\bar{x}\}$ and every $i\in [d_\ell+3]$.
\end{itemize}
\item For every clause $C$, 
add the path $v_C\beta_C\alpha_C$ of order three.
\item For every literal $\ell$ and every clause $C$ such that $C$ contains $\ell$, 
add the vertex $v_\ell^C$ as well as the two edges $v_\ell v_\ell^C$ and $v_C v_\ell^C$.
\end{itemize}

\begin{figure}[h]
	 	\centering
	 	\begin{tikzpicture}[scale=0.8]
        	\begin{pgfonlayer}{nodelayer}
        		\node [style=black node] (0) at (-6, 0) {};
        		\node [style=black node] (1) at (-3, 0) {};
        		\node [style=black node] (2) at (0, 0) {};
        		\node [style=black node] (3) at (3, 0) {};
        		\node [style=black node] (4) at (6, 0) {};
        		\node [style=black node] (5) at (-3, 0) {};
        		\node [style=none] (15) at (-6, 0.5) {$v_w$};
        		\node [style=none] (16) at (-3, 0.5) {$a_w$};
        		\node [style=none] (17) at (0, 0.5) {$b_w$};
        		\node [style=none] (18) at (3, 0.5) {$c_w$};
        		\node [style=none] (19) at (6, 0.5) {$v_{\bar w}$};
        		\node [style=black node] (20) at (-9, 2.5) {};
        		\node [style=black node] (21) at (-9, 2) {};
        		\node [style=black node] (22) at (-9, 1.5) {};
        		\node [style=black node] (23) at (8, 2.5) {};
        		\node [style=black node] (24) at (8, 2) {};
        		\node [style=black node] (25) at (8, 1.5) {};
        		\node [style=black node] (28) at (9.25, 2.5) {};
        		\node [style=black node] (29) at (9.25, 2) {};
        		\node [style=black node] (30) at (9.25, 1.5) {};
        		\node [style=black node] (31) at (-10.25, 2.5) {};
        		\node [style=black node] (32) at (-10.25, 2) {};
        		\node [style=black node] (33) at (-10.25, 1.5) {};
        		\node [style=black node] (34) at (-6, -3) {};
        		\node [style=black node] (35) at (-3, -3) {};
        		\node [style=black node] (36) at (0, -3) {};
        		\node [style=black node] (37) at (3, -3) {};
        		\node [style=black node] (38) at (6, -3) {};
        		\node [style=black node] (39) at (-3, -3) {};
        		\node [style=none] (40) at (-6, -2.5) {$v_x$};
        		\node [style=none] (41) at (-3, -2.5) {$a_x$};
        		\node [style=none] (42) at (0, -2.5) {$b_x$};
        		\node [style=none] (43) at (3, -2.5) {$c_x$};
        		\node [style=none] (44) at (6, -2.5) {$v_{\bar x}$};
        		\node [style=black node] (45) at (-9, -0.5) {};
        		\node [style=black node] (46) at (-9, -1) {};
        		\node [style=black node] (47) at (-9, -1.5) {};
        		\node [style=black node] (48) at (8, -0.5) {};
        		\node [style=black node] (49) at (8, -1) {};
        		\node [style=black node] (50) at (8, -1.5) {};
        		\node [style=black node] (53) at (9.25, -0.5) {};
        		\node [style=black node] (54) at (9.25, -1) {};
        		\node [style=black node] (55) at (9.25, -1.5) {};
        		\node [style=black node] (56) at (-10.25, -0.5) {};
        		\node [style=black node] (57) at (-10.25, -1) {};
        		\node [style=black node] (58) at (-10.25, -1.5) {};
        		\node [style=black node] (59) at (-6, -6) {};
        		\node [style=black node] (60) at (-3, -6) {};
        		\node [style=black node] (61) at (0, -6) {};
        		\node [style=black node] (62) at (3, -6) {};
        		\node [style=black node] (63) at (6, -6) {};
        		\node [style=black node] (64) at (-3, -6) {};
        		\node [style=none] (65) at (-6, -5.5) {$v_y$};
        		\node [style=none] (66) at (-3, -6.5) {$a_y$};
        		\node [style=none] (67) at (0, -5.5) {$b_y$};
        		\node [style=none] (68) at (3, -5.5) {$c_y$};
        		\node [style=none] (69) at (6, -5.5) {$v_{\bar y}$};
        		\node [style=black node] (70) at (-9, -3.5) {};
        		\node [style=black node] (71) at (-9, -4) {};
        		\node [style=black node] (72) at (-9, -4.5) {};
        		\node [style=black node] (73) at (8, -3.5) {};
        		\node [style=black node] (74) at (8, -4) {};
        		\node [style=black node] (75) at (8, -4.5) {};
        		\node [style=black node] (78) at (9.25, -3.5) {};
        		\node [style=black node] (79) at (9.25, -4) {};
        		\node [style=black node] (80) at (9.25, -4.5) {};
        		\node [style=black node] (81) at (-10.25, -3.5) {};
        		\node [style=black node] (82) at (-10.25, -4) {};
        		\node [style=black node] (83) at (-10.25, -4.5) {};
        		\node [style=black node] (84) at (-6, -9) {};
        		\node [style=black node] (85) at (-3, -9) {};
        		\node [style=black node] (86) at (0, -9) {};
        		\node [style=black node] (87) at (3, -9) {};
        		\node [style=black node] (88) at (6, -9) {};
        		\node [style=black node] (89) at (-3, -9) {};
        		\node [style=none] (90) at (-6, -8.5) {$v_z$};
        		\node [style=none] (91) at (-3, -8.5) {$a_z$};
        		\node [style=none] (92) at (-0.25, -8.5) {$b_z$};
        		\node [style=none] (93) at (3, -8.5) {$c_z$};
        		\node [style=none] (94) at (5.75, -8.5) {$v_{\bar z}$};
        		\node [style=black node] (95) at (-9, -6.5) {};
        		\node [style=black node] (96) at (-9, -7) {};
        		\node [style=black node] (97) at (-9, -7.5) {};
        		\node [style=black node] (98) at (8, -6.5) {};
        		\node [style=black node] (99) at (8, -7) {};
        		\node [style=black node] (100) at (8, -7.5) {};
        		\node [style=black node] (103) at (9.25, -6.5) {};
        		\node [style=black node] (104) at (9.25, -7) {};
        		\node [style=black node] (105) at (9.25, -7.5) {};
        		\node [style=black node] (106) at (-10.25, -6.5) {};
        		\node [style=black node] (107) at (-10.25, -7) {};
        		\node [style=black node] (108) at (-10.25, -7.5) {};
        		\node [style=black node] (109) at (-4, -12) {};
        		\node [style=black node] (110) at (0, -12) {};
        		\node [style=black node] (111) at (4, -12) {};
        		\node [style=black node] (112) at (-4, -13) {};
        		\node [style=black node] (113) at (-4, -14) {};
        		\node [style=black node] (114) at (0, -13) {};
        		\node [style=black node] (115) at (0, -14) {};
        		\node [style=black node] (116) at (4, -13) {};
        		\node [style=black node] (117) at (4, -14) {};
        		\node [style=black node] (118) at (0.5, -7.5) {};
        		\node [style=black node] (119) at (-1.25, -7.25) {};
        		\node [style=black node] (120) at (-5.25, -7) {};
        		\node [style=black node] (121) at (-5.5, -7.75) {};
        		\node [style=black node] (122) at (-5.25, -10.25) {};
        		\node [style=black node] (123) at (-2.5, -10.75) {};
        		\node [style=black node] (124) at (2.5, -4.75) {};
        		\node [style=black node] (125) at (3, -7.25) {};
        		\node [style=black node] (126) at (5.5, -7.5) {};
        		\node [style=none] (128) at (0.6, -7) {$v_w^{C_3}$};
        		\node [style=none] (130) at (-1.75, -7.5) {$v_x^{C_3}$};
        		\node [style=none] (132) at (-4.5, -7) {$v_x^{C_1}$};
        		\node [style=none] (134) at (-6.25, -7.5) {$v_y^{C_1}$};
        		\node [style=none] (136) at (-2.25, -10.25) {$v_z^{C_2}$};
        		\node [style=none] (138) at (-6, -10.25) {$v_z^{C_1}$};
        		\node [style=none] (139) at (-3, -12) {};
        		\node [style=none] (140) at (-3.25, -12) {$v_{C_1}$};
        		\node [style=none] (141) at (-3, -13) {};
        		\node [style=none] (142) at (-3.25, -13) {$\beta_{C_1}$};
        		\node [style=none] (143) at (-3.25, -14) {$\alpha_{C_1}$};
        		\node [style=none] (144) at (0.75, -12) {$v_{C_2}$};
        		\node [style=none] (145) at (0.75, -13) {$\beta_{C_2}$};
        		\node [style=none] (146) at (0.75, -14) {$\alpha_{C_2}$};
        		\node [style=none] (147) at (4.75, -12) {$v_{C_3}$};
        		\node [style=none] (148) at (4.75, -13) {$\beta_{C_3}$};
        		\node [style=none] (149) at (4.75, -14) {$\alpha_{C_3}$};
        		\node [style=none] (150) at (2, -4.5) {$v_{\bar w}^{C_2}$};
        		\node [style=none] (151) at (3.5, -7.5) {$v_{\bar x}^{C_2}$};
        		\node [style=none] (152) at (6.25, -7.25) {$v_{\bar y}^{C_3}$};
        		\node [style=black node] (153) at (-10.25, 1) {};
        		\node [style=black node] (154) at (-9, 1) {};
        		\node [style=black node] (155) at (-10.25, -2) {};
        		\node [style=black node] (156) at (-9, -2) {};
        		\node [style=black node] (157) at (-10.25, -2.5) {};
        		\node [style=black node] (158) at (-9, -2.5) {};
        		\node [style=black node] (159) at (-10.25, -5) {};
        		\node [style=black node] (160) at (-9, -5) {};
        		\node [style=black node] (161) at (-10.25, -8) {};
        		\node [style=black node] (162) at (-9, -8) {};
        		\node [style=black node] (163) at (-10.25, -8.5) {};
        		\node [style=black node] (164) at (-9, -8.5) {};
        		\node [style=black node] (165) at (8, 1) {};
        		\node [style=black node] (166) at (9.25, 1) {};
        		\node [style=black node] (167) at (8, -2) {};
        		\node [style=black node] (168) at (9.25, -2) {};
        		\node [style=black node] (169) at (8, -5) {};
        		\node [style=black node] (170) at (9.25, -5) {};
        	\end{pgfonlayer}
        	\begin{pgfonlayer}{edgelayer}
        		\draw [style=black edge] (0) to (5);
        		\draw [style=black edge] (5) to (2);
        		\draw [style=black edge] (2) to (3);
        		\draw [style=black edge] (3) to (4);
        		\draw [style=black edge] (20) to (0);
        		\draw [style=black edge] (21) to (0);
        		\draw [style=black edge] (22) to (0);
        		\draw [style=black edge] (23) to (4);
        		\draw [style=black edge] (24) to (4);
        		\draw [style=black edge] (25) to (4);
        		\draw [style=black edge] (31) to (20);
        		\draw [style=black edge] (32) to (21);
        		\draw [style=black edge] (33) to (22);
        		\draw [style=black edge] (23) to (28);
        		\draw [style=black edge] (24) to (29);
        		\draw [style=black edge] (25) to (30);
        		\draw [style=black edge] (34) to (39);
        		\draw [style=black edge] (39) to (36);
        		\draw [style=black edge] (36) to (37);
        		\draw [style=black edge] (37) to (38);
        		\draw [style=black edge] (45) to (34);
        		\draw [style=black edge] (46) to (34);
        		\draw [style=black edge] (47) to (34);
        		\draw [style=black edge] (48) to (38);
        		\draw [style=black edge] (49) to (38);
        		\draw [style=black edge] (50) to (38);
        		\draw [style=black edge] (56) to (45);
        		\draw [style=black edge] (57) to (46);
        		\draw [style=black edge] (58) to (47);
        		\draw [style=black edge] (48) to (53);
        		\draw [style=black edge] (49) to (54);
        		\draw [style=black edge] (50) to (55);
        		\draw [style=black edge] (59) to (64);
        		\draw [style=black edge] (64) to (61);
        		\draw [style=black edge] (61) to (62);
        		\draw [style=black edge] (62) to (63);
        		\draw [style=black edge] (70) to (59);
        		\draw [style=black edge] (71) to (59);
        		\draw [style=black edge] (72) to (59);
        		\draw [style=black edge] (73) to (63);
        		\draw [style=black edge] (74) to (63);
        		\draw [style=black edge] (75) to (63);
        		\draw [style=black edge] (81) to (70);
        		\draw [style=black edge] (82) to (71);
        		\draw [style=black edge] (83) to (72);
        		\draw [style=black edge] (73) to (78);
        		\draw [style=black edge] (74) to (79);
        		\draw [style=black edge] (75) to (80);
        		\draw [style=black edge] (84) to (89);
        		\draw [style=black edge] (89) to (86);
        		\draw [style=black edge] (86) to (87);
        		\draw [style=black edge] (87) to (88);
        		\draw [style=black edge] (95) to (84);
        		\draw [style=black edge] (96) to (84);
        		\draw [style=black edge] (97) to (84);
        		\draw [style=black edge] (98) to (88);
        		\draw [style=black edge] (99) to (88);
        		\draw [style=black edge] (100) to (88);
        		\draw [style=black edge] (106) to (95);
        		\draw [style=black edge] (107) to (96);
        		\draw [style=black edge] (108) to (97);
        		\draw [style=black edge] (98) to (103);
        		\draw [style=black edge] (99) to (104);
        		\draw [style=black edge] (100) to (105);
        		\draw [style=black edge] (109) to (112);
        		\draw [style=black edge] (112) to (113);
        		\draw [style=black edge] (110) to (114);
        		\draw [style=black edge] (114) to (115);
        		\draw [style=black edge] (111) to (116);
        		\draw [style=black edge] (116) to (117);
        		\draw [style=black edge] (0) to (118);
        		\draw [style=black edge] (118) to (111);
        		\draw [style=black edge] (34) to (119);
        		\draw [style=black edge] (119) to (111);
        		\draw [style=black edge] (34) to (120);
        		\draw [style=black edge] (120) to (109);
        		\draw [style=black edge] (59) to (121);
        		\draw [style=black edge] (121) to (109);
        		\draw [style=black edge] (84) to (122);
        		\draw [style=black edge] (122) to (109);
        		\draw [style=black edge] (84) to (123);
        		\draw [style=black edge] (123) to (110);
        		\draw [style=black edge] (4) to (124);
        		\draw [style=black edge] (124) to (110);
        		\draw [style=black edge] (38) to (125);
        		\draw [style=black edge] (125) to (110);
        		\draw [style=black edge] (63) to (126);
        		\draw [style=black edge] (126) to (111);
        		\draw [style=black edge] (153) to (154);
        		\draw [style=black edge] (154) to (0);
        		\draw [style=black edge] (155) to (156);
        		\draw [style=black edge] (156) to (34);
        		\draw [style=black edge] (157) to (158);
        		\draw [style=black edge] (158) to (34);
        		\draw [style=black edge] (159) to (160);
        		\draw [style=black edge] (160) to (59);
        		\draw [style=black edge] (161) to (162);
        		\draw [style=black edge] (162) to (84);
        		\draw [style=black edge] (163) to (164);
        		\draw [style=black edge] (164) to (84);
        		\draw [style=black edge] (4) to (165);
        		\draw [style=black edge] (165) to (166);
        		\draw [style=black edge] (38) to (167);
        		\draw [style=black edge] (167) to (168);
        		\draw [style=black edge] (63) to (169);
        		\draw [style=black edge] (169) to (170);
        	\end{pgfonlayer}
        \end{tikzpicture}
	 	\caption{The graph $G$ for $F = \left(x\lor y \lor z\right) \land \left(\bar x \lor z \lor \bar w\right) \land \left(x \lor \bar y \lor w \right)$}\label{fig4}
\end{figure}
Clearly, the graph $G$ can be computed from $F$ in polynomial time. 

First, suppose that $\mu$ is a truth assignment to the $n$ boolean variables
such that every clause of $F$ contains both a true and a false literal.
For every literal $\ell$, 
if $\ell$ is true under $\mu$, then let $c(v_\ell)=1$,
otherwise, let $c(v_\ell)=2$.
For every clause $C$, let $c(v_C)=1$.
For every vertex $u$ of degree $2$ in $G$, let $c(u)=1$.
For every literal $\ell$, 
let $c(\alpha_1^\ell)=1$ and 
let $c(\alpha_i^\ell)=2$ for every $i\in [d_\ell+3]\setminus \{ 1\}$.
For every clause $C$, 
if $C$ contains two true literals under $\mu$, then let $c(\alpha_C)=2$,
otherwise, let $c(\alpha_C)=1$.
It is easy to verify that $c$ is a majority additive $2$-coloring of $G$.

Next, suppose that $c$ is a majority additive $2$-coloring of $G$.
For every variable $x$, considering the vertex $b_x$ implies $c(v_x)\not=c(v_{\bar{x}})$.
For every clause $C=\ell_1\vee\ell_2\vee\ell_3$, 
considering the vertex $v_C$ implies that not all three values $c(\ell_i)$ are equal.
Therefore, setting a variable $x$ to true if and only if $c(v_x)=1$ yields a truth assignment 
such that every clause of $F$ contains both a true and a false literal.

This completes the proof for $k=2$.

\bigskip

\noindent For $k\geq 3$,
we reduce from the \textbf{NP}-complete problem \textsc{Graph $k$-Colorability}, cf. [GT4] in \cite{gajo}. 
Let $G$ be a graph of minimum degree at least $4$.
Let $G'$ be obtained from $G$ by replacing every edge $e=uv$ of $G$ 
with a path $ux_u^ey^ex_v^ev$ of order five,
that is, the graph $G'$ arises from $G$ by subdividing each edge three times.

First, suppose that $G$ has a vertex $k$-coloring $c_1:V(G)\to [k]$.
By Theorem 2 in \cite{bo},
the graph $G$ has a {\it majority $3$-edge-coloring} $c_2:E(G)\to [3]$,
that is, for every vertex $u$ of $G$, at most half the edges incident with $u$ have the same color under $c_2$.
Now, let 
$$c':V(G')\to [k]:
x\to \begin{cases}
c_1(x), &\text{ if } x\in V(G),\\
c_2(e), &\text{ if } x=y^e \text{ for some } e\in E(G)\mbox{, and }\\
1, &\text{ otherwise.}
\end{cases}$$
It is easy to verify that $c'$ is a majority additive $k$-coloring of $G'$.

Next, suppose that $c'$ is a majority additive $k$-coloring of $G'$.
Considering, for every edge $e=uv$, the vertex $y^e$ implies $c'(u)\not=c'(v)$.
Hence, restricting $c'$ to $V(G)$ yields a vertex $k$-coloring of $G$,
which completes the proof.
\end{proof}


\begin{thebibliography}{}

\bibitem{ad}
L. Addario-Berry, K. Dalal, C. McDiarmid, B.A. Reed, and A. Thomason,
Vertex-colouring edge-weightings,
Combinatorica 27 (2007), no. 1, 1--12.

\bibitem{an}
M. Anholcer, B. Bosek, and J. Grytczuk,
Majority coloring of infinite digraphs,
Acta Math. Univ. Comenian. (N.S.) 88 (2019), no. 3, 371--376.

\bibitem{ba}
T. Bartnicki, B. Bosek, S. Czerwi\'nski, J. Grytczuk, G. Matecki, and W. \.Zelazny,
Additive coloring of planar graphs,
Graphs Combin. 30 (2014), no. 5, 1087--1098.

\bibitem{bo}
F. Bock, R. Kalinowski, J. Pardey, M. Pil\'sniak, D. Rautenbach, and M. Wo\'zniak,
Majority edge-colorings of graphs,
Electron. J. Combin. 30 (2023), no. 1, Paper No. 1.42, 8 pp.

\bibitem{cz}
S. Czerwi\'nski, J. Grytczuk, and W. \.Zelazny,
Lucky labelings of graphs,
Inform. Process. Lett. 109 (2009), no. 18, 1078--1081.

\bibitem{gajo}
M.R. Garey and D.S. Johnson, 
Computers and intractability, 
A Series of Books in the Mathematical Sciences, Freeman, San Francisco, CA, 1979.

\bibitem{ha}
J. Haslegrave,
Countable graphs are majority 3-choosable,
Discuss. Math. Graph Theory 43 (2023), no. 2, 499–506.

\bibitem{ka}
R. Kalinowski, M. Pil\'sniak, and M. Stawiski,
Unfriendly partition conjecture holds for line graphs,
Combinatorica 45 (2025), no. 1, Paper No. 3, 10 pp.

\bibitem{kal}
M. Kalkowski, M. Karo\'nski, and F. Pfender, 
Vertex-coloring edge-weightings: towards the 1-2-3-conjecture, 
J. Combin. Theory Ser. B {\bf 100} (2010), no.~3, 347--349.

\bibitem{ka1}
M. Kamyczura,
Introduction to majority additive coloring,
Appl. Math. Comput. 508 (2026), Paper No. 129613, 6 pp.

\bibitem{ka2}
M. Kamyczura,
presentation at the 10th Cracow Conference
on Graph Theory, 22-26.09.2025, Cracow.

\bibitem{kar}
M. Karo\'nski, T. \L uczak, and A. Thomason,
Edge weights and vertex colours,
J. Combin. Theory Ser. B 91 (2004), no. 1, 151--157.

\bibitem{ke}
R. Keusch,
A solution to the 1-2-3 conjecture,
J. Combin. Theory Ser. B 166 (2024), 183--202.

\bibitem{kn}
F. Knox and R. \v S\'amal,
Linear bound for majority colourings of digraphs,
Electron. J. Combin. 25 (2018), no. 3, Paper No. 3.29, 4 pp.

\bibitem{kr}
S. Kreutzer, S.-I. Oum, P. Seymour, D. van der Zypen, and D.R. Wood,
Majority colourings of digraphs,
Electron. J. Combin. 24 (2017), no. 2, Paper No. 2.25, 9 pp.

\bibitem{lo}
L. Lov\'asz, 
On decomposition of graphs, 
Studia Sci. Math. Hungar. {\bf 1} (1966), 237--238.

\bibitem{pe}
P. P\c eka\l a and J. Przyby\l o,
On generalised majority edge-colourings of graphs,
Electron. J. Combin. 31 (2024), no. 4, Paper No. 4.66, 19 pp.

\bibitem{pr}
J. Przyby\l o, 
The 1-2-3 conjecture holds for graphs with large enough minimum degree,
Combinatorica 42 (2022), 1487--1512.

\end{thebibliography}
\end{document}